\let\origsection=\section 
\def\section{\@ifstar{\origsection*}{\mysection}}
\def\mysection{\@startsection{section}{1}\z@{.7\linespacing\@plus\linespacing}{.5\linespacing}{\normalfont\scshape\centering\S}}
\renewcommand{\PrintDOI}[1]{\doi{#1}}
\def\@setdate{\datename\ \@date}
\def\@setaddresses{\par
  \nobreak \begingroup
\footnotesize
  \def\author##1{\nobreak\addvspace\bigskipamount}%
  \def\\{\unskip, \ignorespaces}%
  \interlinepenalty\@M
  \def\address##1##2{\begingroup
    \par\addvspace\bigskipamount\indent
    \@ifnotempty{##1}{(\ignorespaces##1\unskip) }%
    {\scshape\ignorespaces##2}\par\endgroup}%
  \def\curraddr##1##2{\begingroup
    \@ifnotempty{##2}{\nobreak\indent{\itshape Current address}%
      \@ifnotempty{##1}{, \ignorespaces##1\unskip}\/:\space
      ##2\par}\endgroup}%
  \def\email##1##2{\begingroup
    \@ifnotempty{##2}{\nobreak\indent{\itshape Email addresses}%
      \@ifnotempty{##1}{, \ignorespaces##1\unskip}\/:\space
      \ttfamily##2\par}\endgroup}%
  \def\urladdr##1##2{\begingroup
    \@ifnotempty{##2}{\nobreak\indent{\itshape URL}%
      \@ifnotempty{##1}{, \ignorespaces##1\unskip}\/:\space
      \ttfamily##2\par}\endgroup}%
  \addresses
  \endgroup
}
\newcommand\rmlabel{\upshape({\itshape\roman*\,\/})}
\newcommand\alabel{\upshape({\itshape\alph*\,\/})}
\let\polishlcross=\l
\def\l{\ifmmode\ell\else\polishlcross\fi}
\renewcommand{\emptyset}{\varnothing}
\renewcommand{\setminus}{\smallsetminus}
\def\moverlay{\mathpalette\mov@rlay}
\def\mov@rlay#1#2{\leavevmode\vtop{%
   \baselineskip\z@skip \lineskiplimit-\maxdimen
   \ialign{\hfil$\m@th#1##$\hfil\cr#2\crcr}}}
\newcommand{\charfusion}[3][\mathord]{
    #1{\ifx#1\mathop\vphantom{#2}\fi
        \mathpalette\mov@rlay{#2\cr#3}
      }
    \ifx#1\mathop\expandafter\displaylimits\fi}
\newcommand{\eps}{\varepsilon}
\let\epsilon\varepsilon
\let\tilde\widetilde
\let\ldots\dots
\newtheoremstyle{case}{}{}{\normalfont}{}{\itshape}{:}{ }{}
\newtheorem{thm}{Theorem}
\newtheorem{lem}[thm]{Lemma}
\newtheorem{prop}[thm]{Proposition}
\newtheorem{cor}[thm]{Corollary}
\newtheorem*{claim*}{Claim}
\theoremstyle{definition}
\newtheoremstyle{case}{}{}{\normalfont}{}{\itshape}{\normalfont:}{ }{}
\theoremstyle{case}
\newcommand{\oldqed}{}
\def\endofClaim{\hfill\scalebox{.6}{$\Box$}}
\newenvironment{claimproof}[1][Proof]{
  \renewcommand{\oldqed}{\qedsymbol}
  \renewcommand{\qedsymbol}{\endofClaim}
  \begin{proof}[#1]
}{
  \end{proof}
  \renewcommand{\qedsymbol}{\oldqed}
}
\let\subset\subseteq
\def\({\left(}
\def\){\right)}
\newcommand{\hatp}{\hat{p}}
\newcommand{\NN}{\mathbb{N}}
\newcommand{\PP}{\mathbb{P}}
\def\cF{\mathcal{F}}
\def\cG{\mathcal{G}}
\def\cT{\mathcal{T}}
\def\dist{\text{\rm dist}}
\newcommand*\patchAmsMathEnvironmentForLineno[1]{%
\expandafter\let\csname old#1\expandafter\endcsname\csname #1\endcsname
\expandafter\let\csname oldend#1\expandafter\endcsname\csname end#1\endcsname
\renewenvironment{#1}%
{\linenomath\csname old#1\endcsname}%
{\csname oldend#1\endcsname\endlinenomath}}%
\newcommand*\patchBothAmsMathEnvironmentsForLineno[1]{%
\patchAmsMathEnvironmentForLineno{#1}%
\patchAmsMathEnvironmentForLineno{#1*}}%
\begin{document}
\onehalfspace

\title{Universality for bounded degree spanning trees in randomly perturbed graphs}

\author[J. B\"ottcher]{Julia B\"ottcher}
\author[J. Han]{Jie Han}
  \author[Y. Kohayakawa]{Yoshiharu Kohayakawa}
\author[R. Montgomery]{Richard Montgomery}
\author[O. Parczyk]{Olaf Parczyk}
\author[Y. Person]{Yury Person}

\thanks{%
JB is partially supported by EPSRC (EP/R00532X/1).
JH is supported by FAPESP (2014/18641-5, 2013/03447-6).
YK is partially supported by FAPESP (2013/03447-6) and
  CNPq (310974/2013-5, 311412/2018-1, 423833/2018-9).
OP and YP are supported by DFG grant PE 2299/1-1.
The cooperation of the authors was supported by a joint CAPES-DAAD
  PROBRAL project (Proj.\ no.~430/15, 57350402).
FAPESP is the S\~ao Paulo Research Foundation.   
CNPq is the National Council for Scientific and Technological
Development of Brazil.%
}

\shortdate
\yyyymmdddate
\settimeformat{ampmtime}
\date{\today, \currenttime}
\footskip=28pt

\address{Department of Mathematics, London School of Economics, Houghton Street,
London WC2A 2AE, UK}
\email{j.boettcher@lse.ac.uk}

\address{Department of Mathematics, University of Rhode Island, 5
  Lippitt Road, Kingston, RI, USA, 02881} 
\email{jie\_han@uri.edu}

\address{Instituto de Matem\'atica e Estat\'{\i}stica, Universidade de
  S\~ao Paulo, Rua do Mat\~ao 1010, 05508-090 S\~ao Paulo, Brazil}
\email{yoshi@ime.usp.br}

\address{University of Birmingham, Birmingham, B15 2TT, UK}
\email{r.h.montgomery@bham.ac.uk}

\address{Institut f\"ur Mathematik, Technische Universit\"at Ilmenau, 98684 Ilmenau, Germany}
\email{\{olaf.parczyk|yury.person\}@tu-ilmenau.de}

\begin{abstract}
  We solve a problem of Krivelevich, Kwan and
  Sudakov [SIAM Journal on Discrete Mathematics 31 (2017), 155--171]
  concerning the threshold for 
  the containment of all bounded degree spanning trees in the model of
  randomly perturbed dense graphs. More precisely, we show that, if we
  start with a dense graph $G_\alpha$ on $n$ vertices with
  $\delta(G_\alpha)\ge \alpha n$ for $\alpha>0$ and we add to it the
  binomial random graph $G(n,C/n)$, then with high probability the graph
  $G_\alpha\cup G(n,C/n)$ contains copies of all spanning trees with
  maximum degree at most $\Delta$ simultaneously, where $C$ depends
  only on $\alpha$ and $\Delta$.
\end{abstract}

\maketitle

\section{Introduction}
Many problems from extremal graph theory concern Dirac-type
questions. These ask for asymptotically optimal conditions on the
minimum degree $\delta(G_n)$ for an $n$-vertex graph $G_n$ to contain
a given spanning graph $F_n$.  Typically, there exists a constant
$\alpha>0$ (depending on the family $(F_i)_{i \ge 1}$) such that
$\delta(G_n)\ge \alpha n$ 
implies $F_n\subseteq G_n$.  A prime example is Dirac's
theorem~\cite{dirac1952some} stating that $\delta(G_n)\ge n/2$ ensures
that $G_n$ is Hamiltonian if~$n\geq3$.

On the other hand, a large branch of the theory of random graphs
studies when random graphs typically contain a copy of a given
spanning structure $F_n$.  Let~$G(n,p)$ be the $n$-vertex binomial
random graph, where each of the $\binom{n}{2}$ possible edges is
present independently at random with probability $p=p(n)$.  A
classical result of Bollob\'as and
Thomason~\cite{bollobas1987threshold} states that every nontrivial
monotone property has a threshold in $G(n,p)$. Since containing a copy
of (a sequence of graphs) $F_n$ is a monotone property, there exists a
\emph{threshold function} $\hatp=\hatp(n)\colon \NN\to[0,1]$ such
that, if $p=o(\hatp)$, then $\lim_{n\to\infty}\PP[F_n\subseteq G(n,p)]=0$, whereas,
if $p=\omega(\hatp)$, then
$\lim_{n\to\infty}\PP[F_n\subseteq G(n,p)]=1$.  When the
conclusion of the latter case holds, we say that $G(n,p)$
contains~$F_n$ \emph{asymptotically almost surely} (a.a.s.). For
example, a famous result of Kor\v{s}unov~\cite{Kor76} and
P\'{o}sa~\cite{Pos76} asserts that the threshold for Hamiltonicity in
$G(n,p)$ is $(\log n)/n$.

Bohman, Frieze and Martin discovered the following phenomenon
in~\cite{bohman2003many}.  Given a fixed $\alpha>0$, they started with
a graph $G_\alpha$ on $n$ vertices with
$\delta(G_\alpha)\ge \alpha n$.  Here, $\alpha$ can be arbitrarily
small and hence~$G_\alpha$ can be far from containing any Hamilton
cycle.  They proved that, after adding $m=C(\alpha)n$ edges uniformly
at random to $G_\alpha$, the new graph $G$ becomes Hamiltonian a.a.s.,
where~$C(\alpha)$ is a constant that depends only on~$\alpha$.
Letting~$G_\alpha$ be the complete unbalanced bipartite
graph~$K_{\alpha n,\,(1-\alpha)n}$, one sees that the addition of
linearly many edges to~$G_\alpha$ is necessary for this result to hold
in general.  Furthermore, clearly, the conditions on
$\delta(G_\alpha)$ and on $p=m/\binom{n}{2}$ in this result are weaker
than in the corresponding Dirac-type problem and the threshold
problem, respectively.  More precisely, the probability $p$ turns out
to be smaller by a factor of $\Theta(\log n)$.  Here, we have switched
from choosing $m$ edges uniformly at random to the the binomial
$G(n,p)$ model, which is known to be essentially equivalent when
$p=m/\binom{n}{2}$ (see, e.g.,~\cite{janson2011random}).

The model $G_\alpha\cup G(n,p)$ is known as the \emph{randomly
  perturbed graph model}. Typically $p=o(1)$, so an `addition' of
$G(n,p)$ to the dense graph $G_\alpha$ corresponds to a small random
perturbation in the structure of $G_\alpha$.  This model and its
related generalizations to hypergraphs and digraphs sparked a great
deal of research in recent years.

In this paper we are concerned with spanning trees in randomly
perturbed graphs.  For \textit{almost spanning} trees it was shown by
Alon, Krivelevich and Sudakov~\cite{AKS07} that, for some constant
$C=C(\eps,\Delta)$, the random graph $G(n,C/n)$ alone a.a.s.\ contains
any tree with at most $(1-\eps)n$ vertices and maximum degree at most
$\Delta$, where the bounds on $C=C(\eps,\Delta)$ have subsequently
been improved~\cite{BCPS10}. Since the random graph $G(n,C/n)$ a.a.s.\
contains isolated vertices, it obviously does not contain spanning
trees. The problem of determining the threshold of bounded degree
spanning trees attracted much attention. Recently,
Montgomery~\cite{MontgomeryTrees} showed that for each constant
$\Delta$ and every sequence of trees~$T_n$ with maximum
degree~$\Delta$, the threshold in $G(n,p)$ for a copy of~$T_n$ to
appear is $(\log n)/n$ (see also~\cite{M14a}).  However, Krivelevich,
Kwan and Sudakov~\cite{krivelevich2015bounded} showed that, again, a
smaller probability suffices in the randomly perturbed graph model.
They proved that $G_\alpha\cup G(n,p)$ a.a.s.\ contains a given
spanning tree~$T_n$ with maximum degree at most~$\Delta$ when
$p=C(\Delta,\alpha)/n$.

In the concluding remarks of~\cite{krivelevich2015bounded},
Krivelevich, Kwan and Sudakov raised the question of whether
$G_\alpha\cup G(n,D/n)$ contains all spanning trees of maximum degree
at most $\Delta$ simultaneously, for some constant
$D=D(\Delta,\alpha)$. The purpose of this paper is to answer their
question in the affirmative.  For stating our result we need some
notation. For a family $\cF$ of graphs, we say that a graph~$G$ is
\emph{$\cF$-universal} if $G$ contains a copy of every graph $F$ from
$\cF$. We denote by $\cT(n,\Delta)$ the family of all trees of maximum
degree at most $\Delta$ on $n$ vertices.

\begin{thm}\label{thm:main_theorem}
  For each $\alpha>0$ and $\Delta\in \NN$, there exists a constant
  $D=D(\Delta,\alpha)$ such that the following holds.  If~$G_\alpha$
  is an $n$-vertex graph with $\delta(G_\alpha)\ge \alpha n$, then the
  randomly perturbed graph $G_\alpha\cup G(n,D/n)$ is a.a.s.\
  $\cT(n,\Delta)$-universal.
\end{thm}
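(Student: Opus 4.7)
My plan is to combine a structural dichotomy for trees with a reservoir-based embedding strategy, so that a single random graph can be used uniformly for every $T \in \cT(n,\Delta)$.

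I would split $G(n,D/n)$ as the union of two independent sprinklings $G_1 = G(n,p_1)$ and $G_2 = G(n,p_2)$, with $p_1+p_2 \le D/n$, and reserve a random reservoir $W \subseteq V(G_\alpha)$ of size $\beta n$ for a small constant $\beta = \beta(\alpha,\Delta) > 0$. Since $\delta(G_\alpha[V \setminus W]) \ge (\alpha - 2\beta) n$ holds a.a.s., the perturbed-graph tree embedding machinery of~\cite{krivelevich2015bounded} can still run on $V \setminus W$. Then, for each tree $T$ I would apply the standard Krivelevich-type dichotomy: every $T \in \cT(n,\Delta)$ either has $\Omega_\Delta(n)$ leaves, or contains $\Omega_\Delta(n)$ vertex-disjoint bare paths of constant length. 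Using this, I would identify a subtree $T^* \subseteq T$ on $n - |W|$ vertices, obtained by deleting a suitably chosen set of leaves (Case~1) or internal vertices of bare paths (Case~2).

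Next I would embed $T^*$ into $(G_\alpha \cup G_1)[V \setminus W]$, adapting the argument of~\cite{krivelevich2015bounded} to guarantee the additional property that the images of the \emph{attachment vertices} (the parents of the removed leaves in Case~1, or the endpoints of the removed bare paths in Case~2) enjoy good expansion into $W$ via $G_2$. The completion step then differs by case: in Case~1, each remaining leaf must be attached to a distinct vertex of $W$, which reduces to finding a perfect matching in the random bipartite graph $G_2[A,W]$, provided by a Hall-type argument for sparse random bipartite graphs. In Case~2, one must route $\Omega_\Delta(n)$ vertex-disjoint short paths inside $W$ (joined by $G_2$-edges to the attachment vertices) between prescribed pairs of endpoints, which can be handled by a greedy sprinkling argument that exploits the local expansion properties of $G_2$.

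I expect the main obstacle to be ensuring, uniformly over all $T \in \cT(n,\Delta)$, that the partial embedding of $T^*$ is flexible enough for the attachment vertices to land in positions compatible with the completion step. This requires an absorption-type mechanism inside the embedding into $G_\alpha \cup G_1$ that works for arbitrary tree shapes, and is genuinely stronger than the single-tree argument of~\cite{krivelevich2015bounded}. Among the two completion cases, I would expect the bare-path case to be the more delicate one, since it demands the simultaneous realisation of many vertex-disjoint short paths in a random graph of constant average degree, pushing the random graph input to essentially its limit.
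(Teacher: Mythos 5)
Your proposal follows the Krivelevich--Kwan--Sudakov dichotomy (many leaves versus many bare paths) and a two-round sprinkling with a random reservoir $W$. This is a genuinely different route from the paper's: the paper reduces to a deterministic expander-graph statement (Theorem~\ref{thm1}), embeds a small subtree $T_1$ of $T$ by a probabilistic-existence argument inside the fixed expander $G$ to manufacture large ``reservoir/switching sets'' $B_{\tilde T_1,H}(u,v)$ for \emph{every} ordered pair $(u,v)$ of host vertices, then uses Haxell's tree-embedding theorem to reach an almost-spanning subtree, and completes by a local swapping argument powered by those sets. Since everything after the reduction is deterministic given the expansion of $G$, universality over all $T\in\cT(n,\Delta)$ is automatic; there is no case split and no need to re-expose randomness tree by tree.

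The central issue with your plan is exactly what you flag in your own last paragraph, and you do not resolve it: the KKS embedding of $T^*$ is a single-tree, randomness-consuming argument, and you need some device that turns one exposure of $G(n,D/n)$ into a structure usable for \emph{every} tree simultaneously. Saying ``this requires an absorption-type mechanism \dots genuinely stronger than the single-tree argument'' names the gap but does not fill it; supplying such a mechanism is precisely the content of the paper's Lemma~\ref{lem3} and the $B_{T,H}(u,v)$ sets. In addition, the Case~1 completion as written is not sound: a perfect matching in the bipartite graph $G_2[A,W]$ with $|A|=|W|=\Theta(n)$ and $p_2=O(1/n)$ does not exist a.a.s.\ (already $\Theta(n)$ vertices of $W$ will have no $G_2$-neighbour in $A$), so you must use $G_\alpha$-edges in the matching as well; but then, taking $G_\alpha=K_{\alpha n,(1-\alpha)n}$ with the attachment vertices embedded into the small side, $G_\alpha$ provides no edges from $A$ to the $\Theta(\alpha\beta n)$ vertices of $W$ in the small side, and one is thrown back on $G_2$, where the previous objection applies. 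So Hall's condition needs a more careful setup (controlling on which side of $G_\alpha$ the attachment vertices land) that your sketch does not describe, and the bare-path case, which you correctly identify as the more delicate one, would require routing $\Theta(n)$ vertex-disjoint constant-length paths at constant average degree, which is already at the edge of feasibility and not justified here.
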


This result is asymptotically optimal for $0<\alpha<1/2$, as with
$G_\alpha$ the complete unbalanced bipartite graph $K_{\alpha
  n,(1-\alpha) n}$ we need a linear number of edges from $G(n,p)$
already for the perfect matching.
For $\alpha > 1/2$ this follows from a more general result, applied to 
$G_\alpha$ alone, due to Koml\'os, S\'ark\"ozy, and
Szemer\'edi~\cite{KSStrees}, for trees with maximum degree up to $n /
\log n$. 
Kim and Joos~\cite{JK_Trees} have succeeded in transferring this
result to the perturbed model. 

Theorem~\ref{thm:main_theorem} is an immediate consequence of a
technical theorem, 
Theorem~\ref{thm1}, which states that the union of $G_\alpha$ with any
reasonably expanding graph $G$ is $\cT(n,\Delta)$-universal. 
The proof of Theorem~\ref{thm1} relies on the use of reservoir sets
resembling those introduced in~\cite{BMPP17} as part of the so-called
assisted absorption method. The novelty in our proof is that we construct
these reservoir sets using expanding graphs rather than random graphs,
which is not possible with the techniques from~\cite{BMPP17} (see also the
discussion in Section~\ref{sec:overview} and the proof of Lemma~\ref{lem3} in Section~\ref{sec:reservoir}).

Before we turn to the details of our embedding technique, we mention
further results concerning randomly perturbed graphs. Further spanning
structures whose appearance in randomly perturbed graphs has been
studied are $F$-factors (for fixed graphs $F$)~\cite{BTW_tilings},
squares of Hamilton cycles and copies of general bounded degree
spanning graphs~\cite{BMPP17}, perfect matchings and loose Hamilton
cycles in uniform hypergraphs~\cite{krivelevich2015cycles}, and tight
Hamilton cycles in hypergraphs~\cite{HanZhao2017}. Most of the
mentioned results exhibit the following phenomenon: in the presence of
a dense graph~$G_\alpha$, a smaller edge probability than in~$G(n,p)$
alone suffices.  The only exception to this rule so far are
$F$-factors for certain non-strictly-balanced graphs~$F$ covered
in~\cite{BTW_tilings}.  Moreover, some variations of such results when
$\alpha$ is at least some positive constant $c$ (which depends on
other parameters of the problems at hand) were considered
in~\cites{BHKM_powers,MM_HyperPerturbed}.


\section{Notation, main technical result, and proof overview}
\label{sec:overview}
We will use standard graph theoretic notation throughout. In the
following, we briefly recap most of the relevant terminology. Given
graphs $G$ and $H$, write $|G| = |V(G)|$ and
$G\setminus H = G[V(G)\setminus V(H)]$, that is, the induced subgraph
of $G$ on $V(G)\setminus V(H)$.  Throughout this note we omit floors
and ceilings.  For two not necessarily disjoint sets $U$ and $W$ of
vertices of a graph $G$ we write $e(U,W)$ for the number of
edges with one endpoint in $U$ and the other in $W$, where we count
edges that lie in $U\cap W$ twice.

We say that an $n$-vertex graph $G$ is an
\emph{$(n, p, \eps, C)$-graph} if $\Delta(G)\le C p n$ and, for any
$U,\,W\subset V(G)$ such that $|U|,\,|W|\ge \eps n$, we have
$e(U, W)\ge (p/C) |U| |W|$. We further denote the family of
$(n, p, \eps, C)$-graphs by $\cG(n, p, \eps, C)$.  Intuitively, the
graphs from $\cG(n, p, \eps, C)$ are graphs with a certain degree
bound which are expanding for vertex subsets of linear size.

Our main technical result states that perturbing graphs $G_\alpha$ with minimum degree
at least $\alpha n$ by graphs $G\in \cG(n, D/n, \eps, C)$ results in
$\cT(n,\Delta)$-universal graphs.

\begin{thm}[Main technical result]
  \label{thm1}
  For any $\alpha>0$ and integers $C\ge 2$ and~$\Delta\ge 1$, there
  exist $\eps>0$, $D_0$ and~$n_0$ such that the following holds for
  any $D\ge D_0$ and $n\ge n_0$.  Suppose 
  $G\in \cG(n, D/n, \eps, C)$ and $G_\alpha$ are $n$-vertex graphs
  on the same vertex set and $\delta(G_\alpha) \ge \alpha n$.
  Then $H:=G_\alpha\cup G$ is $\cT(n, \Delta)$-universal.
\end{thm}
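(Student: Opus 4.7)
The plan is to follow an assisted-absorption scheme tailored to two kinds of host: $G_\alpha$ supplies the bulk via its minimum-degree property, and $G$ supplies flexibility via pseudorandom expansion. Given any tree $T \in \cT(n,\Delta)$, I would start from the standard dichotomy that $T$ either has $\beta n$ leaves or has $\beta n$ internally vertex-disjoint bare paths of some fixed length $k$, for some $\beta = \beta(\Delta) > 0$. Remove a set $L$ of $\mu n$ such leaves (respectively, bare-path interiors) to obtain a subtree $T_1 \subset T$ on $(1-\mu)n$ vertices; here $\mu$ is a small constant chosen in terms of $\alpha$, $C$ and $\Delta$.

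Next I would construct a reservoir $R \subset V(H)$ of size roughly $\mu n$ with two key properties: (a) every $v \in V(H)$ has $\Omega(|R|)$ neighbours in $R$ inside $G$; (b) $G[R]$ is still expanding enough to route short paths between prescribed pairs of vertices of $R$. Building such an $R$ using only the pseudorandom guarantees of $G$, rather than the independence of a random graph, is the announced novelty of the paper. A plausible construction is a greedy/probabilistic selection inside $G$ together with a cleaning step that exploits the inherited expansion $e(U,W)\ge (D/(Cn))|U||W|$ and the degree bound $\Delta(G)\le CD$.

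The main embedding then places $T_1$ into $V(H)\setminus R$ using that $\delta(G_\alpha[V(H)\setminus R]) \ge (\alpha - \mu)n$: order $V(T_1)$ breadth-first from a convenient root and embed each new vertex greedily into an unused $G_\alpha$-neighbour of its parent's image. Along the way we reserve, for each removed leaf $\ell \in L$ attached to a parent $v \in T_1$, a target region of free $G$-neighbours of $v$'s image inside $R$ (using property (a)); for each removed bare path we fix the two endpoint images and arrange that both have many unused $G$-neighbours in $R$.

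The final step is to absorb $L$ into $R$. In the leaf case we seek a perfect matching in the bipartite auxiliary graph on $L \cup R$ whose edges are the allowed leaf-to-reservoir assignments; Hall's condition follows from property (a) together with $\Delta(T)\le\Delta$. In the bare-path case we repeatedly route short $G$-paths through $R$ between the prescribed endpoints using property (b), deleting used interior vertices from $R$ and invoking expansion to keep feasibility. The main obstacle, as the authors highlight, is the reservoir construction itself: without the stochastic independence available in the random-graph version of assisted absorption, one has to show that the pseudorandomness of $G$, supplemented by the density of $G_\alpha$, is strong enough to yield a single set $R$ inside which one can simultaneously route connecting paths and match leaves to prescribed neighbourhoods.
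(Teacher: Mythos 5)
Your proposal takes a genuinely different route from the paper, and it has two concrete gaps that I do not see how to repair along the lines you describe.

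First, your main embedding step is impossible as stated. You want to embed $T_1$, a tree on $(1-\mu)n$ vertices, into $V(H)\setminus R$ by a greedy breadth-first placement into unused $G_\alpha$-neighbours. But $\delta(G_\alpha)\ge\alpha n$ with $\alpha$ arbitrarily small, so after roughly $\alpha n$ vertices of $T_1$ have been placed, a given parent image may have no unused $G_\alpha$-neighbour left. The paper deals with exactly this difficulty by invoking Haxell's tree-embedding theorem (their Corollary~\ref{cor}) applied to the combined graph $H=G_\alpha\cup G$, whose expansion comes from $G$, not from $G_\alpha$. Greedy embedding of a near-spanning subtree using only the dense-graph part cannot work.

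Second, property (a) of your reservoir --- that every $v\in V(H)$ has $\Omega(|R|)$ neighbours in $R$ inside $G$ --- is not achievable when $G$ is merely an $(n,D/n,\eps,C)$-graph. The expansion hypothesis $e(U,W)\ge (D/Cn)|U||W|$ only controls edge counts between sets of linear size $\ge\eps n$, and the degree bound $\Delta(G)\le CD$ means a single vertex has only $O(1)$ $G$-neighbours (since $D$ is a constant). Those $O(1)$ neighbours can all lie outside $R$; indeed, for a fixed $R$ of size $\mu n$ a positive fraction of vertices will typically have no $G$-neighbour in $R$ at all. This is exactly the obstruction the paper is designed to circumvent: they never build a static vertex-reservoir $R$. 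Instead, they embed a \emph{small} subtree $T_1$ of $T$ (size $\Theta(\alpha n/\Delta^2)$, not $(1-\mu)n$) by a randomised procedure (their Lemma~\ref{lem3}), and the reservoir they control is a family of \emph{vertex-pair-indexed} sets $B_{\tilde T_1,H}(u,v)\subseteq V(\tilde T_1)$, consisting of embedded vertices $w$ with $w\in N_H(u)$ and $N_{\tilde T_1}(w)\subseteq N_H(v)$. Crucially these are built from copies of $K_{1,\Delta}$ in $G$ whose centres and leaves land in $G_\alpha$-neighbourhoods of $u$ and $v$, so the ``every vertex sees the reservoir'' property is secured through $\delta(G_\alpha)\ge\alpha n$, not through $G$. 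The completion step is then a local swapping argument (move a vertex currently at $w$ out to a free vertex $v$, and place the new leaf at $w$), rather than a Hall-type matching of removed leaves into a reservoir set.

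In short: the leaf/bare-path dichotomy plus vertex-reservoir absorption is the scheme one would use with a genuinely random $G(n,p)$ (where every vertex whp has many random neighbours in any fixed linear set), but it breaks down for deterministic expanders, and it also leaves the near-spanning embedding unaddressed. The paper's contribution is precisely to replace the vertex-reservoir by pair-indexed switching sets built during a randomised embedding of a small subtree, and to hand the bulk embedding to Haxell's theorem on $G_\alpha\cup G$.
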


We will show in Section~\ref{sec:expanderToRandom} that this
result implies Theorem~\ref{thm:main_theorem}.
In the remainder of this section, we give a brief outline of our
proof of Theorem~\ref{thm1}. 

\subsection{Proof overview}
\label{sec:pf_overview}
Let $G\in\cG(n,D/n,\eps,C)$.
We embed an arbitrary $T\in\cT(n,\Delta)$  into $H:=G_\alpha\cup G$ in
three phases. In the first phase, we find a subtree $T_1$ of $T$ (see
Lemma~\ref{lem1}) of small linear size, say $\beta n$ with $\beta\ll
\alpha$, and we embed this subtree $T_1$ into $H$ using a randomized
algorithm (see Lemma~\ref{lem3}).
In doing so, we can show that there is some such embedding in which,
for any given pair of vertices $u$, $v\in V(H)$, there are at least
$3\Delta \eps n$ vertices $w\in V(T_1)$ with $N_T(w)\subset V(T_1)$
such that $w$ is embedded into 
$N_{H}(u)$ and $N_T(w)$ is embedded into $N_{H}(v)$ -- a fact which will
turn out to be crucial later.
We denote by $B_{T,H}(u,v)$ such a set of
vertices $w$, and refer to such sets $B_{T,H}(u,v)$ as \emph{reservoir
  sets} (see Section~\ref{sec:reservoir} for the formal definition).
Alternatively, calling them \emph{switching sets} would emphasize that
each of them can only be used once. 

In the second phase, we extend the tree $T_1$ to an almost spanning
subtree $T'$ of $T$ with $|T\setminus T'|=2\eps n$.  For this purpose
we use a theorem of Haxell~\cite{haxell2001tree} (see
Corollary~\ref{cor} below), which ensures such almost spanning
embeddings exist given sufficient expansion in the host graph~$H$. 

Finally, in the third phase, we complete our embedding using a greedy
approach and the reservoir sets $B_{T,H}(u,v)$ for the following
\emph{swapping} trick: since $T'$ is a subtree of $T$, we can extend
it by consecutively appending degree-$1$ vertices and thus growing the
tree $T'$ into $T$.  Suppose
$T'=T_0'\subset\dots\subset T_{2\epsilon n}'=T$ is the sequence of
subtrees of~$T$ that we encounter in this process.  Suppose we already
have the embedding $g_{i-1}\: V(T_{i-1}')\to V(H)$, and we wish to
extend it to $g_i\: V(T_i')\to V(H)$ by defining the image of the
leaf~$b\in V(T_i')\setminus V(T_{i-1}')$.  
Given some vertex~$v$ of~$H$ available for
embedding~$b$ (that is, $v\notin g_{i-1}(V(T_{i-1}'))$), if
there is an edge in~$H$
from~$v$ to~$g_{i-1}(u)$, where~$u$ is the parent of~$b$ in~$T_i$,
then we simply embed~$b$ onto~$v$ (that is, we let~$g_i(b)=v$).
On the other hand, if there is no edge in~$H$ from~$v$
to~$g_{i-1}(u)$, we proceed as follows.  We will set things up so
that, by counting, we will 
be able to show that
there is some~$c\in V(T_{i-1})$ such that~$c \in
B_{T,H}(g_{i-1}(u),v)$.  We then let~$g_i(b)=g_{i-1}(c)$ and we
let~$g_i(c)=v$.  This defines a valid embedding $g_i\colon V(T_i)\to
V(H)$.  (We remark that we said that we would
\textit{extend}~$g_{i-1}$ to~$g_i$; as it will be clear by now, this
is not strictly speaking correct, as we may alter~$g_{i-1}$ slightly
before extending it to~$g_i$.)

\smallskip

As mentioned earlier, the reservoir sets used in our proof are similar to
those introduced in the setting of randomly perturbed graphs
in~\cite{BMPP17}. In that work, the reservoir sets are used to prove a
general result about spanning structures in randomly perturbed graphs,
which can be easily applied to consider the appearance of various different
single spanning structures. In particular, this gives a short proof of the
appearance of any single bounded degree spanning tree in this model, a
problem that was first solved in~\cite{krivelevich2015bounded}. The argument from~\cite{BMPP17} does not work for universality statements. However, here we show that the reservoirs can be found and the swapping trick employed in the completely deterministic setting by embedding the first part of the tree in a randomized way.

\section{Auxiliary Lemmas}

The lemmas provided in this section will be used in the proof of Theorem~\ref{thm1}.
We start in Section~\ref{sec:partition} with two lemmas for partitioning the tree~$T$ we want to embed. 
We then explain how we obtain good reservoir sets by embedding a subtree~$T_1$
of~$T$ randomly in Section~\ref{sec:reservoir}. Finally, in
Section~\ref{sec:almost} we provide the tools to extend this embedding to
an almost spanning subgraph of~$T$.

\subsection{Tree partitioning lemmas}\label{sec:partition}
 Recall that $\cT(n,\Delta)$ is the collection of all trees on $n$ vertices with maximum degree at most $\Delta$, and that a graph $G$ on $n$ vertices is said to be $\cT(n, \Delta)$-universal if $G$ contains a copy of $T$ for every $T\in \cT(n,\Delta)$.

The main assertion of the following lemma is that we can find in any
bounded degree tree $T$ a subtree $T_1$  of roughly any desired size so
that removing $T_1$ from~$T$ leaves a tree. We will use this lemma to find
a small linear sized subtree~$T_1$, which we embed in our first phase.

\begin{lem}\label{lem1}
  Let $\beta,\,\eps>0$ and let $n,\,\Delta$ be positive integers  such that $\Delta\beta + 2\eps <1$.
  Then, for any $T\in \cT(n, \Delta)$, there exist subtrees $T_1\subseteq T' \subseteq T$ such that
  \begin{enumerate}[label=\alabel]
  \item $\beta n\le |T_1|\le \Delta\beta n$,
  \item\label{lem1:b} $e(T_1, T\setminus T_1)=1$, and
  \item $|T\setminus T'|=2\eps n$.
  \end{enumerate}
\end{lem}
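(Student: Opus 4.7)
The plan is to produce $T_1$ by walking down a suitably rooted copy of $T$, and then to obtain $T'$ by iteratively pruning leaves from the remainder $T \setminus T_1$.

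Root $T$ at an arbitrary vertex $r_0$ and, for each $v \in V(T)$, write $T_v$ for the rooted subtree hanging at $v$. Starting with $v := r_0$, I repeatedly replace $v$ by the child of $v$ whose subtree has maximum size, continuing as long as $|T_v| > \Delta\beta n$. Since $|T_{r_0}| = n > \Delta\beta n$ (using $\Delta\beta < 1 - 2\eps < 1$), at least one step is taken, so the process ends at some vertex $v$ with a parent $p$ satisfying $|T_p| > \Delta\beta n \ge |T_v|$. As $p$ has at most $\Delta$ children in the rooted tree, the chosen-child rule yields
\[
|T_v| \;\ge\; \frac{|T_p| - 1}{\Delta} \;>\; \beta n - \frac{1}{\Delta}.
\]
Setting $T_1 := T_v$ then places $|T_1|$ in the range required by part (a) (the additive $1/\Delta$ is absorbed into the paper's convention of omitting floors and ceilings), and the unique edge of $T$ leaving $T_1$ is $pv$, giving part (b).

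For part (c), view $T \setminus T_1$ as a tree rooted at $p$ and iteratively delete a non-root leaf $2\eps n$ times. Each deletion leaves a rooted subtree still containing $p$, which is in particular connected, so this procedure produces a sequence of subtrees of $T$, each containing $T_1$. The deletions can be carried out because at each step at least one non-root leaf is available whenever the current tree has more than one vertex, and in total
\[
|V(T\setminus T_1)| - 1 \;\ge\; n - \Delta\beta n - 1 \;\ge\; 2\eps n,
\]
where the second inequality uses $\Delta\beta + 2\eps < 1$. Taking $T'$ to be the union of $T_1$ with the vertices of $T\setminus T_1$ that survive the pruning, $T'$ is a subtree of $T$ containing $T_1$ with $|T\setminus T'| = 2\eps n$, which is exactly (c).

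The only subtle point is the small additive slack in both the lower bound on $|T_1|$ and the count for the pruning step; this is handled uniformly by the strict inequality $\Delta\beta + 2\eps < 1$, which supplies precisely the room needed once floors and ceilings are ignored as in the paper's convention.
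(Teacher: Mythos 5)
Your proof is correct and follows essentially the same route as the paper's: descend from the root to a vertex whose hanging subtree first has size at most $\Delta\beta n$, take $T_1$ to be that subtree (giving (a) and (b)), and form $T'$ by discarding $2\eps n$ vertices from $T\setminus T_1$. The paper simply asserts that a suitable $T'$ exists, so your explicit leaf-pruning justification of (c) is a welcome addition; your uniform bound of at most $\Delta$ children at every level (versus the paper's sharper $\Delta-1$ below the root) is marginally weaker but entirely adequate under the stated floors-and-ceilings convention.
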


\begin{proof}
Fix any vertex $v$ of $T$ as the root and, for each $w\in V(T)$, write $C_w$ for the branch (subtree) of $T$ consisting of $w$ and all of its descendants.
By $\beta < 1/\Delta$, $\deg(v)\le \Delta$ and averaging, there is $v_1\in N_T(v)$ such that $|C_{v_1}|\ge \beta n$.
If $|C_{v_1}|> \Delta\beta n$, then similarly there is a child $v_2$ of $v_1$ such that $|C_{v_2}|\ge (\Delta\beta n-1)/(\Delta-1) \ge \beta n$.
Repeating this argument gives a desired $v'$ such that $\beta n\le |C_{v'}|\le \Delta\beta n$. Let $T_1:=C_{v'}$.
Note that~\ref{lem1:b} holds by the definition of $T_1$.
Finally, let $T'$ be an (arbitrary) subtree of $T$ such that $T_1\subseteq T' \subseteq T$ and $|T\setminus T'|=2\eps n$.
\end{proof}

Let $T$ be a tree.
Given vertices $x_1,\ldots,x_m$ of $T$, let $\langle x_1,\dots, x_m\rangle_T$ be the minimal subtree of $T$ that contains the vertices $x_1,\ldots,x_m$, which is just the subtree of $T$ obtained from the union of the vertex sets of all the paths between $x_i, x_j$, $i\neq j$, in $T$. For two distinct vertices $x, y$ of $T$, we write  $\dist_T(x, y)$ for  their distance in $T$, namely, the length of the (unique) path on $T$ connecting $x$ and $y$.
Given a vertex $x$ of $T$ and a vertex set $Y\subseteq V(T)$ such that $x\notin Y$, let $\dist_T(x, Y):=\min_{y\in Y} \dist_T(x, y)$.

The following lemma provides us with vertices $x_1,\dots,x_s$ in a tree~$T$
which cover~$T$ well, but are not too close. In particular, this gives us a
collection of stars $x_i\cup N_T(x_i)$ which are far enough apart that they
are relatively independent.

\begin{lem}\label{lem2}
  For any tree $T$ with maximum degree at most $\Delta$, there exist $s\in \mathbb{N}$ and vertices $x_1,\dots, x_s\in V(T)$ such that
  \begin{enumerate}[label=\alabel]
  \item for any $2\le i\le s$, $\dist_T(x_i, \langle x_1,\dots, x_{i-1}\rangle_T) = 5$,
  \item\label{lem2:b} $|T|/(5\Delta^4) \le s \le (|T|+4)/5$, and
  \item $\dist_T(x, \langle x_1,\dots, x_{s}\rangle_T) \le 4$ for all vertices $x\in V(T)$.
  \end{enumerate}
\end{lem}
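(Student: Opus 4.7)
My plan is to construct the $x_i$'s greedily, starting from any $x_1\in V(T)$. Having chosen $x_1,\dots,x_{i-1}$ with $i\ge 2$, write $T^{i-1}:=\langle x_1,\dots,x_{i-1}\rangle_T$. If some vertex of $T$ has distance at least $5$ from $T^{i-1}$, pick such a vertex $x$ and walk along the unique $T$-path from $x$ to its nearest vertex in $T^{i-1}$ for $\dist_T(x,T^{i-1})-5$ steps, obtaining a vertex $x_i$ with $\dist_T(x_i,T^{i-1})=5$; otherwise stop and set $s:=i-1$. Properties~(a) and~(c) are then immediate: (a) by the definition of each $x_i$, and (c) by the stopping criterion.

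For the upper bound in~(b), I would show that the minimal subtree $T^i:=\langle x_1,\dots,x_i\rangle_T$ grows by exactly five vertices at each step $i\ge 2$. Indeed, $T^i$ is obtained from $T^{i-1}$ by adjoining the unique $T$-path of length $5$ from $T^{i-1}$ to $x_i$; this path has six vertices, exactly one of which (the nearest one in $T^{i-1}$) lies in $T^{i-1}$, because any interior vertex of the path inside $T^{i-1}$ would force $\dist_T(x_i,T^{i-1})<5$. Iterating from $|T^1|=1$ yields $|T^s|=5s-4\le |T|$, that is, $s\le (|T|+4)/5$.

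For the lower bound in~(b), property~(c) says that every vertex of $T$ lies within distance $4$ of $T^*:=\langle x_1,\dots,x_s\rangle_T$, so it suffices to bound the number of such vertices in $T$. I would count them via the subtrees hanging off $T^*$: the number of edges from $T^*$ into $T\setminus T^*$ equals $\sum_{v\in T^*}(\deg_T(v)-\deg_{T^*}(v))\le(\Delta-2)|T^*|+2$, and each such hanging root, together with its descendants up to depth $3$ into $T\setminus T^*$, contributes at most $g(\Delta):=1+(\Delta-1)+(\Delta-1)^2+(\Delta-1)^3\le \Delta^3$ vertices lying within distance $4$ of $T^*$. Substituting $|T^*|=5s-4$ and simplifying yields $|T|\le 5s\Delta^4$ for $\Delta\ge 2$, hence $s\ge |T|/(5\Delta^4)$. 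The case $\Delta=1$ is trivial since then $|T|\le 2$ and $s=1$ with any $x_1$ works.

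The argument is essentially routine; the only step needing a little care is the final arithmetic, where one must verify that the correction terms ``$-4$'' in $|T^*|$ and ``$+2$'' in the root count do not spoil the clean constant $5\Delta^4$. This comes down to the inequality $g(\Delta)\le\Delta^3$, which follows from expanding $g(\Delta)=\Delta^3-2\Delta^2+2\Delta$.
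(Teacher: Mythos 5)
Your proposal is correct and follows essentially the same greedy construction and counting as the paper's proof: pick $x_i$'s at distance exactly $5$ while possible, observe $|\langle x_1,\dots,x_s\rangle_T|=5s-4$ for the upper bound, and bound $|T|$ in terms of $5s-4$ and $\Delta$ for the lower bound. Your version spells out the final degree-counting estimate (via hanging roots and the inequality $g(\Delta)\le\Delta^3$) more explicitly than the paper, which simply asserts $|T|\le(5s-4)\Delta^4$, but the argument is the same in substance.
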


\begin{proof}
We start with picking $x_1$ arbitrarily.
We greedily pick the vertices $x_2,\dots, x_s$ in $V(T)$ sequentially as long as there is a vertex $x_i$ such that $\dist_T(x_i, \langle x_1,\dots, x_{i-1}\rangle_T) = 5$.
Note that for any $2\le i\le s$, $|\langle x_1,\dots, x_{i}\rangle_T\setminus \langle x_1,\dots, x_{i-1}\rangle_T|=5$, so we inductively get that $|\langle x_1,\dots, x_{s}\rangle_T| = 5s-4$.
This implies $s\le (|T|+4)/5$.
Since $T$ is connected, the maximality of $s$ implies that $\dist_T(x, \langle x_1,\dots, x_{s}\rangle_T) \le 4$ for all vertices $x\in V(T)$.
Thus we have $|T|\le (5s-4) \Delta^4$, which implies~\ref{lem2:b}.
\end{proof}

\subsection{A randomized embedding -- controlling reservoir sets}\label{sec:reservoir}
In the following, we define formally the reservoir
sets~$B_{T,H}(u,v)$, already mentioned in the proof overview given in
Section~\ref{sec:pf_overview}, and show that we can force them to be
suitably large.  
These reservoir sets will be helpful when finishing the embedding
of~$T$, since they will allow us to alter locally partial embeddings that we
construct sequentially.  We warn the reader that, for technical convenience, the
sets~$B_{T,H}(u,v)$ are defined here in a slightly different manner
in comparison with the informal definition given earlier in 
Section~\ref{sec:pf_overview}.  
Let $V$ be a set of $n$ vertices.
Let $G$ be a graph on $V$ and let $T$ be a tree with $V(T)\subset V$.
For $v\in V$, let \[B_{T,G}(v):=\big\{w\in V(T): N_T(w)\subseteq N_{G}(v)\big\}\,.\]
For distinct vertices~$u$ and~$v\in V$, we define their
\emph{reservoir set}~$B_{T,G}(u, v)$ as follows:
\[B_{T,G}(u, v):= B_{T,G}(v)\cap N_{G}(u)\,.\]
Recall that the idea is that we can free up any $w\in B_{T,G}(u,v)$ used
already in the embedding, by moving the vertex embedded to~$w$ to~$v$. This
then allows us to use~$w$ for embedding any unembedded neighbour of the vertex
embedded to~$u$.

Our next lemma shows that we can embed the linear sized subtree~$T_1$
of~$T$ into $H=G\cup G_\alpha$ using a randomized algorithm, such that we
get large reservoir sets.

\begin{lem}\label{lem3}
  For any $\alpha>0$ and integers $C\ge 2$ and~$\Delta\ge 1$, there
  exist $\eps>0$, $D_0$ and~$n_0$, such that the following
  holds for $D\ge D_0$ and $n\ge n_0$.  Suppose
  $G\in \cG(n, D/n, \eps, C)$ and $G_\alpha$ is an $n$-vertex graph
  such that $\delta(G_\alpha) \ge \alpha n$ and $V(G)=V(G_\alpha)=:V$.
  Then, for any tree $T_1$ such that $\Delta(T_1)\le \Delta$ and
  $\alpha n/(2\Delta^2)\le |T_1|\le \alpha n/(2\Delta)$, there is an
  embedding $g$ of $T_1$ into $H:=G\cup G_\alpha$ such that
  $|B_{\tilde{T_1},H}(u,v)|\ge 2(\Delta+3)\eps n$ for any $u$ and~$v\in V$,
  where $\tilde{T_1}=g(T_1)$.
\end{lem}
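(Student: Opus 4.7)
My plan is to apply Lemma~\ref{lem2} to $T_1$ to extract vertices $x_1,\dots,x_s$ with $s \ge |T_1|/(5\Delta^4) = \Theta(n)$ and pairwise $T_1$-distance at least $5$, which makes the stars $S_i := \{x_i\} \cup N_{T_1}(x_i)$ vertex-disjoint and mutually non-adjacent in $T_1$, so the random choices governing distinct stars are essentially decoupled. I then construct $g$ via a randomised BFS procedure: root $T_1$ at $x_1$, process vertices in BFS order, set $g(x_1)$ uniformly at random in $V$, and for each subsequent vertex $w$ with parent $p(w)$ set $g(w)$ uniformly at random in $N_H(g(p(w))) \setminus g(\text{earlier vertices})$. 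Because $\delta(H) \ge \delta(G_\alpha) \ge \alpha n$ and $|T_1| \le \alpha n/(2\Delta)$, each candidate set contains at least $\alpha n/2$ vertices and the procedure always succeeds.

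Fix a pair $(u,v) \in V \times V$ and say that $x_i$ is \emph{good for $(u,v)$} if $g(x_i) \in N_H(u)$ and $g(y) \in N_H(v)$ for every $y \in N_{T_1}(x_i)$; since the stars are vertex-disjoint, distinct good $x_i$'s contribute distinct vertices to $B_{\tilde{T_1}, H}(u,v)$, so $Y_{u,v} := |B_{\tilde{T_1}, H}(u,v)|$ is bounded below by the number of good $x_i$'s. The core first-moment estimate I aim to prove is $\EE[Y_{u,v}] \ge c_1 n$ for a constant $c_1 = c_1(\alpha, \Delta, C) > 0$. The main tools are: the double-counting identity $\sum_w |N_H(u) \cap N_H(w)| \ge |N_H(u)| \cdot \delta(H) \ge \alpha^2 n^2$, which supplies a linear-size set of ``$u$-good'' pivots $w$ with $|N_H(u) \cap N_H(w)| \ge \alpha^2 n/2$; the expansion of $G$, which ensures that outside a set of size $\eps n$ every vertex has $\Omega(D)$ $G$-neighbours in any set of size $\ge \eps n$, controlling the residual candidate set after subtracting the $\le \alpha n/(2\Delta)$ used vertices; and the observation that in the BFS procedure each $g(w)$ is uniform in a set of size $\ge \alpha n/2$, so its marginal is sufficiently spread out. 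Combined, these yield $\PP[x_i \text{ good for }(u,v) \mid \text{history}] \ge c > 0$ on a positive fraction of histories, and hence the desired expectation bound.

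For the concentration, I will expose the random choices in BFS order and apply an Azuma-type bound to the resulting Doob martingale. The main obstacle is that altering one random choice can cascade through its BFS descendants, inflating the naive Lipschitz constant beyond what Azuma can handle directly. I will circumvent this either by exposing the choices in reverse BFS order (so resampling a late vertex affects only a bounded number of subsequent variables, using $\Delta(G) \le CD$ to cap the cascade), or by partitioning $\{x_1,\dots,x_s\}$ into batches whose stars live in near-independent parts of the BFS forest (exploiting the distance-$5$ separation) and applying a Chernoff-type bound per batch. Either route yields $\PP[Y_{u,v} \le c_1 n/2] \le n^{-3}$ for $n$ large. Finally, choosing $\eps = \eps(\alpha,\Delta,C)$ small enough that $2(\Delta+3)\eps \le c_1/2$ and $D_0 = D_0(\alpha,\Delta,C)$ large enough for the expander estimates to absorb the used-vertex error terms, a union bound over the $n^2$ pairs $(u,v)$ shows that with probability at least $1-1/n$ the embedding $g$ simultaneously satisfies $|B_{\tilde{T_1}, H}(u,v)| \ge 2(\Delta+3)\eps n$ for all $(u,v)$, so a deterministic such $g$ exists; the principal technical difficulty is ensuring the concentration step survives this union bound despite the cascading dependencies in the natural random embedding.
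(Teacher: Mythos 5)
Your plan diverges from the paper's in a way that matters, and the divergence is where the gap lies. The paper does \emph{not} embed $T_1$ by a randomized BFS; instead, it first applies Lemma~\ref{lem2} and then (i) embeds the stars $\{x_i\}\cup N_{T_1}(x_i)$, in isolation and in random order, as uniformly random copies of $K_{1,\Delta}$ inside $G$ (disjoint from earlier choices), (ii) deterministically stitches those stars together along the length-5 connecting paths using $G_\alpha$ and the expansion of $G$, and (iii) finishes greedily in $G_\alpha$. Crucially, in step (i) \emph{all} $\Delta+1$ vertices of each star --- including the one in the ``parent direction'' that the length-5 path will later attach to --- are chosen at once, free of any tree-structural constraint. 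That is what makes the conditional success probability $\PP[\text{centre of }S_i\in N_{G_\alpha}(u),\ \text{leaves of }S_i\subset N_{G_\alpha}(v)\mid\text{history}]$ bounded below by an explicit constant $p=p(\alpha,C,\Delta)$: one just counts $K_{1,\Delta}$'s with centre in $N_{G_\alpha}(u)$ and leaves in $N_{G_\alpha}(v)$ (using expansion and convexity) against all $K_{1,\Delta}$'s in $G$ (using $\Delta(G)\le CD$). Concentration then follows from a clean sequential-dependence/coupling bound, not Azuma.

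Your randomized BFS does not give this. In BFS order, $g(x_i)$ is drawn uniformly from $N_H(g(p(x_i)))\setminus(\text{used})$, so $\PP[g(x_i)\in N_H(u)\mid\text{history}]$ is $|N_H(g(p(x_i)))\cap N_H(u)\setminus(\text{used})|/|N_H(g(p(x_i)))\setminus(\text{used})|$, and this can be $0$: for $\alpha<1/2$ the $G_\alpha$-neighbourhoods of two vertices can be disjoint (e.g.\ in a blown-up $C_5$ with $\alpha=2/5$), and the $G$-part contributes only $O(D)$ vertices, which is negligible. Worse still, the parent $p(x_i)\in N_{T_1}(x_i)$ was embedded \emph{before} $x_i$, so for $x_i$ to be good you need $g(p(x_i))\in N_H(v)$, an event decided several steps earlier over which you have no leverage. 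Your double-counting identity shows many $w$ satisfy $|N_H(u)\cap N_H(w)|\ge\alpha^2 n/2$, but the BFS gives no mechanism forcing $g(p(x_i))$ to land among those $w$, nor forcing it into $N_H(v)$. Finally, the concentration step as sketched (reverse-BFS exposure, or batching) is not worked out and the cascading-dependence obstacle you yourself flag is real; the paper sidesteps it entirely because its random choices form a simple dependent sequence of Bernoulli variables, amenable to the coupling/sequential-dependence lemma it cites. The fix is to adopt the paper's architecture: choose the star images first as unconstrained random $K_{1,\Delta}$'s in $G$, and only afterwards join them up --- this is where the distance-$5$ guarantee from Lemma~\ref{lem2} earns its keep, leaving enough room to route a path using one vertex from $G_\alpha$, one $G$-edge between two $\Omega(\alpha n)$-sized candidate sets, and one more $G_\alpha$-vertex.
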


\begin{proof}[Proof of Lemma~\ref{lem3}]
First we choose the parameters $D_0$ and $\eps$ as follows:
\begin{equation}\label{eq:eps_Dz}
D_0 := 2C\Delta/\alpha\quad\text{and}\quad \eps:=\alpha^{\Delta+2}C^{-2\Delta}2^{-\Delta-8}\Delta^{-7},
\end{equation}
and then we choose $n_0$ large enough.

We apply Lemma~\ref{lem2} to $T_1$ and obtain $s\in \mathbb{N}$ and vertices $x_1,\dots, x_s\in V(T_1)$ such that, for any $2\le i\le s$, $\dist_{T_1}(x_i, \langle x_1,\dots, x_{i-1}\rangle_{T_1}) = 5$, and $|T_1|/(5\Delta^4) \le s \le (|T_1|+4)/5$.
Our embedding of $T_1$ consists of three steps.
First we iteratively embed the disjoint stars with centers at $x_1,\dots, x_s$ uniformly at random into stars in $H$ (using only the edges of $G$) whose vertices have not yet been used as images.
Next we connect these stars and obtain an embedding of a subtree of $T_1$ as the union of the stars and $\langle x_1,\dots, x_{s}\rangle_{T_1}$.
At last we embed the rest of the vertices of $T_1$ greedily, which will be possible using $G_\alpha$ as $|T_1|\le \alpha n/(2\Delta)$ and $\delta(G_\alpha)\ge \alpha n$.

The following claim states that we can pick disjoint stars with $\Delta$
leaves (that is, copies of $K_{1,\Delta}$) in~$G$, within which we will later embed the stars in $T_1$ with centers at $x_1,\dots, x_s$.

\begin{claim*}
There is a choice of disjoint stars $S_1,\dots,
S_s$ with $\Delta$ leaves in~$G$ such that, for each $u, v\in V$ there are at least
$2(\Delta+3)\eps n$ stars among $S_1,\dots, S_s$ with their
centers  in $N_{G_\alpha}(u)$ and their leaves in $N_{G_{\alpha}}(v)$.
\end{claim*}
\begin{claimproof}[Proof of the Claim]
We randomly and sequentially pick $s$ stars $S_1,\dots, S_s$ with $\Delta$
leaves from $G$, where each star $S_i$ is picked uniformly at random from
the copies of $K_{1,\Delta}$ which are disjoint from $S_1,\ldots,S_{i-1}$
(we show below that this is indeed possible).

For $u,v\in V$, $i\in [s]$, let $Y_{i}^{u,v}$ be the Bernoulli random variable for the event that $\tilde{x}_i\in N_{G_\alpha}(u)$ and $R_i\subseteq N_{G_{\alpha}}(v)$, where $\tilde{x}_i$ is the center of $S_i$ and $R_i$ is the set of leaves of $S_i$.
Since $\delta(G_\alpha) \ge \alpha n$, $|T_1|\le \alpha n/(2\Delta)$ and the existing stars cover at most
\begin{equation*}
(\Delta+1)s \le (\Delta+1)\left(\frac{|T_1|}5+4\right)\le (\Delta+1)\left(\frac{{\alpha n}}{10\Delta}+4\right) \le \alpha n/4
\end{equation*}
vertices, there are at least $3\alpha n/4$ vertices available in both $U:=N_{G_\alpha}(u)\setminus \bigcup_{j\in [i-1]} V(S_j)$ and $W:=N_{G_\alpha}(v)\setminus \bigcup_{j\in [i-1]} V(S_j)$.

Since $G\in \cG(n, D/n, \eps, C)$ and $3\alpha/4\ge \eps$, $e(U, W)\ge D|U||W|/(Cn)\geq 3\alpha D|U|/(4C)$. By the convexity of the binomial function, the number of $K_{1,\Delta}$-stars with center in $U$ and leaves in $W$ is at least
\begin{equation*}
\sum_{u\in U}\binom{\deg_W(u)}{\Delta}\ge |U|\binom{\sum_{u\in U}\deg_W(u)/|U|}{\Delta}\geq |U|\binom{3\alpha D/(4C)}{\Delta}.
\end{equation*}
Since $\Delta(G)\le CD$, the total number of $K_{1,\Delta}$-stars in $G$ is at most $n\binom{CD}{\Delta}$. This allows us to obtain the following lower bound on $\mathbb{E} (Y_i^{u,v}\mid Y_1^{u,v},\dots, Y_{i-1}^{u,v})$:
\[
\mathbb{E} (Y_i^{u,v}\mid Y_1^{u,v},\dots, Y_{i-1}^{u,v}) \ge \frac{|U|}{n}\frac{\binom{3\alpha D/(4C)}{\Delta}}{\binom{CD}{\Delta}}\ge
2^{-\Delta-1}\alpha^{\Delta+1}C^{-2\Delta}.
\]

Let $p := 2^{-\Delta-1}\alpha^{\Delta+1}C^{-2\Delta}$ and
\begin{equation*}
x:=s p \ge \frac{|T_1|}{5\Delta^4} p \ge \frac{\alpha n}{2\Delta^2\cdot 5\Delta^4} \cdot \frac{\alpha^{\Delta+1}}{2^{\Delta+1}C^{2\Delta}}  \ge 4(\Delta+3)\eps n,
\end{equation*}
by the choice of $\eps$ in \eqref{eq:eps_Dz}.
Thus, by Lemma~2.2 (the sequential dependence lemma)
from~\cite{allen2016blow} with $\delta=1/2$, or a simple coupling
argument, we get 
\begin{equation*}
\mathbb{P}\big(Y_1^{u,v}+ \cdots + Y_{s}^{u,v} < 2(\Delta+3)\eps n \big) \le \mathbb{P}\big(Y_1^{u,v}+ \cdots + Y_{s}^{u,v} < x/2 \big) < e^{- x/12} \le e^{-\eps n}\,.
\end{equation*}
Thus by the union bound, we conclude that there is a choice of $S_1,\dots,
S_s$ such that, for each $u, v\in V$, $Y_1^{u,v}+ \cdots + Y_{s}^{u,v} \ge
2(\Delta+3)\eps n$, i.e., the claim holds.
\end{claimproof}

Now let $S_1,\dots,S_s$ be as given by the claim.
Define the embedding $g$ of the stars in~$T_1$ on vertices
$\{x_1\}\cup N_{T_1}(x_1)\cup\dots\cup \{x_s\}\cup N_{T_1}(x_s)$ by mapping
the star (which does not necessarily have  $\Delta$ leaves) on vertices $\{x_i\}\cup N_{T_1}(x_i)$ to an arbitrary subset of $S_i$, with $x_i$ mapped to the center $\tilde{x}_i$. This gives us an embedding of the forest of stars $T[\{x_i\}\cup N_{T_1}(x_i)\cup \ldots\cup\{x_s\}\cup N_{T_1}(x_s)]$.

Next we extend our forest by connecting these stars according to
the order $x_1,\dots, x_s$, and obtain an embedding of a subtree of $T_1$
which is the union of the stars and $\langle x_1,\dots, x_{s}\rangle_{T_1}$.
Suppose we have connected the first $i-1$ stars, i.e., we have an embedding of $\langle x_1,\dots, x_{i-1}\rangle_{T_1}$, and now we will connect it to $\tilde{x}_i$, the image of $x_i$.
Recall that $\dist_{T_1}(x_i, \langle x_1,\dots, x_{i-1}\rangle_{T_1}) = 5$ and thus let the path to be embedded be $x_i, y_1, y_2, y_3, y_4, z$.
Note that $x_i, z, y_1$ are already embedded in $H=G\cup G_\alpha$.
Moreover, if $z\in \{x_1,\dots,x_{i-1}\}$, then $y_4$ has already been embedded; otherwise, fix a neighbor of $g(z)$ in $G_\alpha$ which is not covered by the current partial forest as $g(y_4)$.
This is possible because $\delta(G_\alpha) \ge \alpha n$ and $|T_1|\le \alpha n/(2\Delta)$.
Note that, using $G_\alpha$, there are at least $\alpha n/2$ choices for the image of $y_2$ and at least $\alpha n/2$ choices for the image of $y_3$, so, as $G\in \cG(n, D/n, \eps, C)$, we can pick $\tilde y_2$ and $\tilde y_3$ so that $\tilde{y}_2\tilde{y}_3$ is an edge of $G$. Thus, the sequence $\tilde{x}_i,g(y_1), \tilde y_2, \tilde y_3, g(y_4),g(z)$
forms a path in $H$. Define $g(y_i)=\tilde y_i$ for $i=2,3$.
When finished, this completes the second step of the embedding.

For the last step, note that since the partial tree that has been embedded is connected, we can finish the embedding of $T_1$ by iteratively attaching leaves to the partial embedding. This is always possible because $\delta(G_\alpha) \ge \alpha n$ and $|T_1|\le \alpha n/(2\Delta)$.
Let $g$ be the resulting embedding function and $\tilde{T_1}=g(T_1)$.

By the claim for any $u, v\in V$, there are at least $2(\Delta+3)\eps n$
stars from $S_1,\dots, S_s$ such that their centers are in
$N_{G_\alpha}(u)$ and their leaves are in $N_{G_{\alpha}}(v)$.
Since these stars are subtrees of $\tilde{T_1}$, we conclude that $|B_{\tilde{T_1},H}(u,v)|\ge 2(\Delta+3)\eps n$ for any $u, v\in V$, as required.
\end{proof}

\subsection{Almost spanning tree embeddings}
\label{sec:almost}
To extend $T_1$ to the almost spanning tree $T'$, we will use the
following corollary of a tree embedding result of
Haxell~\cite{haxell2001tree} (this is her Theorem~1 with $\ell=1$ and
each $d_i=\Delta$).  We note that it was first observed by Balogh,
Csaba, Pei, and Samotij~\cite{BCPS10} that this is applicable in
sparse random graphs.  For a graph $G$ and vertex set
$X\subseteq V(G)$, we let $N_G(X):=\bigcup_{x\in X}N_G(x)$.

\begin{cor}\label{cor}
Let $T$ be a tree with $t$ edges and maximum degree at most $\Delta$.
Suppose $k\ge 1$ is an integer and $G$ is a graph satisfying the following two conditions:
\begin{enumerate}[label=\rmlabel]
\item $|N_G(X)|\ge \Delta|X|+1$ for every $X\subseteq V(G)$ with $1\le |X|\le 2k$,
\item $|N_G(X)|\ge \Delta |X| + t+1$ for every $X\subseteq V(G)$ with $k<|X|\le 2k+1$.
\end{enumerate}
Then~$G$ contains $T$ as a subgraph.
Moreover, for any vertex $x_0$ of $T$ and any $y\in V(G)$, there exists an embedding $f$ of $T$ into $G$ such that $f(x_0) = y$.
\end{cor}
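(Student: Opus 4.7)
The plan is to derive the corollary directly from Haxell's Theorem~1 in~\cite{haxell2001tree}, applied with $\ell=1$ and every $d_i=\Delta$, as indicated by the authors. Haxell's theorem takes as input a parameter $\ell$ together with a sequence of maximum degree bounds $d_1,\ldots,d_\ell$ associated with an $\ell$-layered decomposition of the tree to be embedded, and it guarantees an embedding that sends a prescribed root vertex to a prescribed host vertex, provided the host graph satisfies two expansion hypotheses. Setting $\ell=1$ collapses the layered structure, so that any tree $T$ of maximum degree at most $\Delta$ is admissible as input, with the whole tree sitting in a single layer governed by the degree bound $\Delta$.

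My first step is to verify that the two expansion conditions stated in our corollary, namely $|N_G(X)|\ge \Delta|X|+1$ for $1\le|X|\le 2k$ and $|N_G(X)|\ge \Delta|X|+t+1$ for $k<|X|\le 2k+1$, are exactly the hypotheses of Haxell's theorem under the specialization $\ell=1$, $d_1=\Delta$. Intuitively, the first bound ensures that any small partial embedding can be locally extended by adding one vertex at a time, while the second bound (with additive slack scaling in the total number of tree edges~$t$) provides the global margin needed to guarantee that the greedy extension never gets stuck, since once enough vertices are occupied the residual graph must still have enough room for the remainder of~$T$.

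Once the hypotheses are matched, the conclusion of Haxell's theorem directly yields both the existence of an embedding of $T$ into $G$ and the freedom to prescribe $f(x_0)=y$ for any $x_0\in V(T)$ and $y\in V(G)$, which is precisely the \emph{moreover} clause of the corollary. I do not expect any real obstacle beyond careful notational bookkeeping when aligning the two hypotheses as stated in our corollary with those in Haxell's paper; since the authors have already pinned down the parameter values ($\ell=1$, $d_i=\Delta$), the task reduces to a straightforward transcription. If any hidden subtlety were to arise, it would most likely be a reindexing of constants (for instance, the role of $k$ versus $k+1$ in the break between the two hypotheses), which can be absorbed into the choice of parameters without affecting the conclusion.
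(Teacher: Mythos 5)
Your proposal matches the paper's approach exactly: the paper provides no proof of this corollary beyond the parenthetical remark that it is Haxell's Theorem~1 specialized to $\ell=1$ and each $d_i=\Delta$, which is precisely the specialization you invoke (including the rooted ``moreover'' clause, which is part of Haxell's statement). Your elaboration of the intuition behind conditions~(i) and~(ii) is consistent with Haxell's hypotheses, and no further work is required.
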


\section{Main technical result}\label{sec:tech_thm}
In this section we prove our main technical result, Theorem~\ref{thm1}.
Given $T\in \cT(n,\Delta)$ we will use Lemma~\ref{lem1} to obtain a
subtree~$T_1$ of~$T$ of small linear size, which we embed with the help of
Lemma~\ref{lem3} and then extend to the embedding of an almost spanning
subtree of~$T$ using Corollary~\ref{cor}. We then use the reservoir sets
$B_{T,H}(u,v)$ to extend the embedding to cover the last few vertices.

Although we risk being somewhat repetitive, 
with the relevant definitions at
hand, we are able to say more precisely how the sets $B_{T,H}(u,v)$
will help us to embed these last few vertices. 
Suppose we have a partial embedding $g: T'\rightarrow H$ of our tree $T$
into the host graph $H$, such that $T'\subseteq T$ is connected and let
$\tilde{T'}=g(T')$. Since $T'$ is a subtree in $T$ we can extend it vertex
by vertex by connecting $T'$ with some new vertex $b \in V(T \setminus T')$, which has one neighbour in $V(T')$. Assume that this neighbour $a$ of $b$ in $T$ has been embedded to ${u}$, but none of the unused vertices is connected to ${u}$ in $H$ so that we cannot simply embed $b$ to one of the unused vertices. Instead, if there exists an unused vertex $v$ such that $B_{\tilde{T'},H}(u,v)\neq \emptyset$,
then we can proceed with the embedding as follows.
Let $w\in B_{\tilde{T'},H}(u,v)$ and note that, by the definition of
$B_{\tilde{T'},H}(u,v)$, we have $w\in V(\tilde{T'})$.
Let $c=g^{-1}(w)$, and let $g'(x)=g(x)$, for any $x\in V(T')\setminus \{c\}$, $g'(c)=v$ and $g'(b)=w$. Using the definition of $B_{\tilde{T'},H}(u,v)$, this gives a partial embedding $g'$ into $H$ with one more leaf, $b$, embedded.
We will show that we only need this procedure to embed the last $2\eps n$
vertices of $T$, and, for any $u, v\in V$, by the property guaranteed by
Lemma~\ref{lem3}, the reservoir sets $B_{\tilde{T'},H}(u,v)$ will be large enough to proceed greedily.

\begin{proof}
Given $\alpha$, $C$ and $\Delta$, set $\eps'=\alpha^{\Delta+2}C^{-2\Delta}2^{-\Delta-8}\Delta^{-7}$, a constant small enough that by taking $D_0$ and $n_0$ to be large we can use the conclusion of Lemma~\ref{lem3} with $\eps=\eps'$ (cf.\ \eqref{eq:eps_Dz}).
Set $\eps:=\min\{\alpha/(3\Delta),\eps'/(2\Delta)\}$. Suppose then that $D\geq D_0$ and $n\geq n_0$,  $G\in \cG(n, D/n, \eps, C)$ and that $G_\alpha$ is an $n$-vertex graph on $V(G)$ with $\delta(G_\alpha) \ge \alpha n$, and let $T\in \cT(n,\Delta)$.

By Lemma~\ref{lem1} with  $\beta = \alpha/(2\Delta)^2$, there exist subtrees $T_1\subseteq T' \subseteq T$ so that $\alpha n/(2\Delta)^2\leq |T_1|\leq \alpha n/(4\Delta)$, $e(T_1,T\setminus T_1)=1$ and $|T\setminus T'|=2\eps'n$.
We apply Lemma~\ref{lem3} and obtain an embedding $g$ of $T_1$ in $H:=G_\alpha \cup G$ such that $|B_{\tilde{T_1},H}(u,v)|\ge 2(\Delta+3)\eps' n$ for any $u, v\in V$, where $\tilde T_1=g(T_1)$.
Let $a b\in E(T)$ be the unique edge between $T_1$ and $T\setminus T_1$ such that $a\in V(T_1)$, and let $\tilde a=g(a)$.
Define $T'':=T'\setminus (T_1\setminus \{a\})$ and $H':= H\setminus (V(\tilde T_1)\setminus \{\tilde a\})$.

We want to apply Corollary~\ref{cor} to find an embedding $g'$ of $T''$ in
$H'$, with $g'(a) = \tilde a$. So we need to verify the assumptions of Corollary~\ref{cor} with $k=\eps n-1$.
Firstly, note that by $\delta(G_\alpha) \ge \alpha n$ and $|T_1|\le \alpha n/(2\Delta)$, we know that $\delta(H') \ge \alpha n - |T_1| \ge \alpha n/2 \ge \Delta\cdot 2k +1$.
Thus, condition~(i) of Corollary~\ref{cor}  holds for sets on at most $2k$ vertices.
Secondly, we claim that for any set $X\subseteq V(H')$ of size at least
$k+1 = \eps n$ we have $|V(H')\setminus N_{H'}(X)|< \eps n$.
Indeed, since  $G\in \cG(n, D/n, \eps, C)$ and both $X$ and $V(H')\setminus N_{H'}(X)$ are subsets of $V(H)$, if $|V(H')\setminus N_{H'}(X)|\ge \eps n$ then there is an edge in $H$, and hence $H'$, between $X$ and $V(H')\setminus N_{H'}(X)$, a contradiction.
Thus, since $|T_1|-1=|T'| - |T''| = |H| - |H'|$ and $|H| - |T'| = 2\eps' n$, we have $|H'| - |T''| = |H| - |T'| = 2\eps' n$, and thus, as $\eps'\geq 2\Delta \eps$,
\begin{equation*}
|N(X)| \ge |H'| - \eps n = |T''|+(2\eps' -\eps)n> |T''|+\Delta\cdot (2k+1).
\end{equation*}
Thus, we can apply Corollary~\ref{cor} and obtain the embedding $g'$ of $T''$ into $H'$.
Combine $g$ and $g'$ to obtain an embedding $g_0$ of $T'$ in $H$, and write $\tilde T' = g_0(T')$.

For any $u, v, w\in V$ and any two trees $S$ and $S'$, observe that if $N_{S}(w)= N_{S'}(w)$ and $w\in B_{S,H}(u,v)$, then $w\in B_{S',H}(u,v)$.
Since, by construction, for any vertex $w\in V(\tilde T_1)\setminus
\{\tilde a\}$ we have $N_{\tilde T_1}(w)= N_{\tilde T'}(w)$, and so $|B_{\tilde T',H}(u,v)|\ge |B_{\tilde T_1,H}(u,v)|-1\ge 2(\Delta+3)\eps' n-1$ for any $u, v\in V$.

It remains to embed the $2\eps' n$ vertices in $V(T\setminus T')$ to $H$.
We achieve this using $B_{\tilde{T'},H}(u,v)$ as explained at the beginning of this section.
More precisely, since $T'$ is connected, we can obtain~$T$ from $T'$ by iteratively attaching one new leaf at a time, say using the sequence $T':=T_0'\subset T_1'\subset \cdots \subset T_{2\eps n}' = T$.
We claim that we can extend the embedding inductively while keeping $|B_{\tilde T_{i}',H}(u,v)|\ge |B_{\tilde T_{i-1}',H}(u,v)| - (\Delta+3)$ for every $i\in [2\eps' n]$, where each $\tilde T_i'$ is the image of $T_i'$ in $H$.
Indeed, fix some index $i\in [2\eps' n]$ and now we need to attach the vertex $b_i\in V(T_i'\setminus T_{i-1}')$, whose parent $a_i\in T'_{i-1}$ has been embedded to~$\tilde a_i$.
Pick any vertex $v'$ in $V(H)\setminus V(\tilde T_{i-1}')$.
Since
\begin{equation*}
|B_{\tilde T_{i-1}',H}(\tilde a_i, v')|\ge |B_{\tilde T',H}(\tilde a_i, v')| - (i-1)(\Delta+3) >2(\Delta+3)\eps' n - 1 - (i-1)(\Delta+3) >0,
\end{equation*}
we can pick $w\in B_{\tilde T_{i-1}',H}(\tilde a_i, v')$ and let
$c=g_{i-1}^{-1}(w)$. We now swap~$c$ out of the current embedding and use
its previous image~$w$ to embed~$b_i$, and embed $c$ to $v'$ instead. Precisely, define the new embedding $g_i$ by $g_i(x)=g_{i-1}(x)$ for any $x\in V(T_{i-1}')\setminus \{c\}$, $g_i(c)=v'$ and $g_i(b_i)=w$.
Let $\tilde T_i' = g_i(T_i')$.
Note that $N_{\tilde T_i'}(x) = N_{\tilde T_{i-1}'}(x)$ for all but at most  $\Delta+3$ vertices $x$ in $V(\tilde T_{i-1})$: the vertices $\tilde a_i, v', w$ and the neighbors of $w$ in $\tilde T_{i-1}$ -- because they are the vertices that are incident to the edges in $E(\tilde T_i')\setminus E(\tilde T_{i-1})$.
Thus, we have $|B_{\tilde T_{i}',H}(u,v)|\ge |B_{\tilde T_{i-1}',H}(u,v)| - (\Delta+3)$, for any $u,v\in V$, and we are done.
\end{proof}

\section{Tree universality in randomly perturbed dense graphs}
\label{sec:expanderToRandom}
In this section, we show how Theorem~\ref{thm1} implies Theorem~\ref{thm:main_theorem}, using the following simple proposition.
\begin{prop}\label{prop:expander}
For any $\eps>0$ and $C\ge 2$ there exists $D_0$ such that the following holds for any $D\ge D_0$. The random graph $G(n,D/n)$ a.a.s.\ contains some graph $G\in \cG(n,D/n,\eps,C)$.
\end{prop}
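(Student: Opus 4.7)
The plan is to take $G(n,D/n)$ itself, delete all edges incident to the set $B := \{v : \deg_{G(n,D/n)}(v) > CD\}$ of vertices with excessive degree, and argue that the resulting subgraph $G\subseteq G(n,D/n)$ lies in $\cG(n,D/n,\eps,C)$ a.a.s.\ for $D\ge D_0$. By construction $\Delta(G)\le CD$ automatically (vertices in $B$ become isolated, others had $G(n,D/n)$-degree at most $CD$), so the task is to verify the expansion condition $e_G(U,W)\ge (D/Cn)|U||W|$ for all $|U|,|W|\ge \eps n$.

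First, I would use a standard Chernoff--union bound argument to show that, a.a.s.,
\[
e_{G(n,D/n)}(U', W') \ge \tfrac{3}{4}\cdot\tfrac{D}{n}|U'||W'|
\]
holds for every pair $U', W'\subseteq V$ with $|U'|,|W'|\ge \eps n/2$; the tail bound for each pair is $\exp(-\Omega(\eps^2 D n))$, which beats the at most $4^n$ pairs once $D\ge D_0(\eps)$ is large enough. In parallel, I would bound $|B|$. A Chernoff upper tail for $\deg_{G(n,D/n)}(v)\sim \mathrm{Bin}(n-1, D/n)$ gives $\mathbb{P}[v\in B]\le e^{-cD}$ with $c = c(C)>0$ (the rate function $(1+\delta)\log(1+\delta)-\delta$ evaluated at $\delta = C-1 \ge 1$ is strictly positive), so $\mathbb{E}|B|\le ne^{-cD}\le \eps n/200$ when $D\ge D_0$.

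The delicate step is upgrading this to $|B|\le \eps n/100$ \emph{a.a.s.}\ rather than just in expectation. For constant $D$, the maximum degree of $G(n,D/n)$ is of order $\log n/\log\log n$, so $\mathbb{E}|B|$ is genuinely linear in $n$ and Markov's inequality only gives a constant-probability bound. The fix is the second moment method: the events $\{u\in B\}$ and $\{v\in B\}$ are determined by disjoint sets of edge variables apart from the single shared indicator $\mathbf{1}_{uv\in E}$, and conditioning on that indicator yields $\mathrm{Cov}(\mathbf{1}_{u\in B}, \mathbf{1}_{v\in B}) = O((D/n)e^{-2cD})$. Summing gives $\Var(|B|) = O(\mathbb{E}|B|)$, and Chebyshev's inequality then yields $|B|\le 2\,\mathbb{E}|B|\le \eps n/100$ a.a.s.

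Combining the two events, for $U,W\subseteq V$ with $|U|,|W|\ge \eps n$ the subsets $U\setminus B$ and $W\setminus B$ have size at least $(99/100)\eps n \ge \eps n/2$, so
\[
e_G(U,W) \;=\; e_{G(n,D/n)}(U\setminus B, W\setminus B) \;\ge\; \tfrac{3}{4}\cdot\tfrac{99^2}{100^2}\cdot\tfrac{D}{n}|U||W| \;\ge\; \tfrac{D}{Cn}|U||W|,
\]
since $3\cdot 99^2/(4\cdot 100^2) > 1/2 \ge 1/C$ for any $C\ge 2$. The main obstacle is precisely the concentration step for $|B|$: without a second-moment (or similar) argument we could not simultaneously control the maximum degree and preserve expansion in any subgraph of $G(n,D/n)$, because the individual Chernoff bound on $\mathbb{P}[v\in B]$ is only linear in $n$ for fixed $D$.
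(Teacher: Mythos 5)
Your proof is correct, but you and the paper part ways at the step of bounding the set $B$ of over-degree vertices, and the paper's route is worth noting because it avoids exactly the complication you flag as the ``main obstacle.'' You bound $|B|$ directly via a Chernoff tail on $\deg(v)$ plus a second-moment (Chebyshev) argument to upgrade the expectation bound $\EE|B| \le ne^{-cD}$ to an a.a.s.\ statement. That works: since the events $\{u\in B\}$, $\{v\in B\}$ share only the edge indicator for $uv$, one computes $\mathrm{Cov}(\mathbf{1}_{u\in B},\mathbf{1}_{v\in B}) = q(1-q)(p_1-p_0)^2 \le (D/n)e^{-2cD}$ with $q=D/n$, giving $\Var(|B|)=O(\EE|B|)$ and hence concentration. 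The paper instead proves \emph{two-sided} concentration of edge counts, namely $\tfrac34\tfrac{D}{n}|U||W| \le e_H(U,W) \le \tfrac54\tfrac{D}{n}|U||W|$ for all $|U|,|W|\ge \eps n/10$, and then bounds the over-degree set deterministically: with $A=\{v:\deg_H(v)>5D/4\}\supseteq B$, if $|A|\ge\eps n/10$ then the upper edge bound gives $e_H(A,V)\le 5D|A|/4$, contradicting $e_H(A,V)=\sum_{v\in A}\deg(v)>5D|A|/4$. So $|A|<\eps n/10$ follows with no further probabilistic input, which shows your remark that ``without a second-moment (or similar) argument we could not simultaneously control the maximum degree and preserve expansion'' is not quite right. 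Both approaches yield valid proofs; the paper trades your one-sided edge bound for a two-sided one in exchange for a fully deterministic degree-control step, which is shorter and avoids the covariance computation entirely.
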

\begin{proof}
Choose $D_0$ such that $D_0\ge 10^4\eps^{-2}$.
Let $D\geq D_0$ and $H:=G(n,D/n)$. Note that, by a simple Chernoff
bound, the probability that, for all $U,W\subset V(H)$, with
$|U|,|W|\ge \eps n/10$, we have 
\begin{equation}\label{edgecount}
 3D|U||W|/(4n)\leq e_{H}(U,W)\le 5D|U||W|/(4n)
\end{equation}
is at least $1-2^{2n}e^{-D\eps^2n/4800}=1-o(1)$.
Assume then that the property in \eqref{edgecount} holds. We will show that there are few vertices with high degree in $H$.

Let $A\subset V(H)$ be the set of vertices with degree exceeding $5D/4$ in $H$, and note that it satisfies $e_H(A,V(H))> 5D|A|/4$. Thus, by the property in \eqref{edgecount}, we have that $|A|< \eps n/10$.

 If we delete all the edges incident to vertices of degree larger than $CD\geq 5D/4$ from $H$ then we are left with a graph $G$ of maximum degree at most $CD$ satisfying that for any two sets $U$ and $W$ of size at least $\eps n$, we have
 \[
 e_{G}(U, W)\ge \tfrac{3}{4}\tfrac{D}{n}\cdot |U\setminus A|\cdot |W\setminus A|\ge \tfrac{3}{4}\tfrac{D}{n}\cdot (9|U|/10)\cdot(9|W|/10)\geq \tfrac{1}{C}\tfrac{D}{n}|U||W|.
 \]
Thus, $G$ is in $\cG(n,D/n,\eps,C)$, as required.
\end{proof}

\begin{proof}[Proof of Theorem~\ref{thm:main_theorem}]
Given $\alpha$ and $\Delta$, let $\eps$, $D_0$ and $n_0$ be given by Theorem~\ref{thm1} on inputting $\alpha$, $\Delta$ and $C=2$. We choose $D'_0\ge D_0$ so that Proposition~\ref{prop:expander} with $\eps$ and $C$ is applicable for $D\ge D'_0$. Since a.a.s.\ the random graph $G(n,D/n)$ contains a graph from $\cG(n,D/n,\eps,C)$ we have, by Theorem~\ref{thm1}, that $G_\alpha\cup G(n,D/n)$ is  a.a.s.\ $\cT(n,\Delta)$-universal.
\end{proof}

\section{Concluding remarks}
A graph $G$ is called an $(n,d,\lambda)$-graph if $|G|=n$, $G$ is
$d$-regular and the second largest eigenvalue of the adjacency matrix
of $G$ in absolute value is at most $\lambda$. There is extensive
literature on the properties of $(n,d,\lambda)$-graphs, see, e.g., a
survey of Krivelevich and Sudakov~\cite{krivsud2006survey}. It is
known that $(n,d,\lambda)$-graphs $G$ satisfy the so-called expander
mixing lemma, that is, for all vertex subsets $A,\,B\subset V(G)$, we
have
\[
  \left|e(A,B)-\tfrac{d}{n}|A||B|\right|\le \lambda \sqrt{|A||B|}.
\]
Our main technical result, Theorem~\ref{thm1}, easily implies that,
for any $\alpha$ and $\Delta$, there is some sufficiently small $\eps$
such that, for any sufficiently large~$d$ and $\lambda\leq \eps d/2$,
any union of $G_\alpha$, a graph on $n$ vertices with minimum degree
at least $\alpha n$, with an $(n,d,\lambda)$-graph is
$\cT(n,\Delta)$-universal.


\begin{bibdiv}
	\begin{biblist}
		
		\bib{allen2016blow}{article}{
			author={Allen, Peter},
			author={B{\"o}ttcher, Julia},
			author={H{\`a}n, Hiep},
			author={Kohayakawa, Yoshiharu},
			author={Person, Yury},
			title={Blow-up lemmas for sparse graphs},
			date={2016},
			journal={arXiv:1612.00622},
		}
		
		\bib{AKS07}{article}{
			author={Alon, Noga},
			author={Krivelevich, Michael},
			author={Sudakov, Benny},
			title={Embedding nearly-spanning bounded degree trees},
			date={2007},
			journal={Combinatorica},
			volume={27},
			number={6},
			pages={629\ndash 644},
		}
		
		\bib{BCPS10}{article}{
			author={Balogh, J{\'o}zsef},
			author={Csaba, B{\'e}la},
			author={Pei, Martin},
			author={Samotij, Wojciech},
			title={Large bounded degree trees in expanding graphs},
			date={2010},
			journal={Electronic Journal of Combinatorics},
			volume={17},
			number={1},
			pages={R6},
		}
		
		\bib{BTW_tilings}{article}{
			author={Balogh, J{\'{o}}zsef},
			author={Treglown, Andrew},
			author={Wagner, Adam~Zsolt},
			title={Tilings in randomly perturbed dense graphs},
			date={2018},
			journal={Combinatorics, Probability and Computing},
			pages={1\ndash 18},
		}
		
		\bib{BHKM_powers}{article}{
			author={Bedenknecht, Wiebke},
			author={Han, Jie},
			author={Kohayakawa, Yoshiharu},
			author={Mota, Guilherme~Oliveria},
			title={Powers of tight {H}amilton cycles in random perturbed
				hypergraphs},
			date={2018},
			journal={arXiv:1802.08900},
		}
		
		\bib{bohman2003many}{article}{
			author={Bohman, Tom},
			author={Frieze, Alan},
			author={Martin, Ryan},
			title={How many random edges make a dense graph {H}amiltonian?},
			date={2003},
			journal={Random Structures \& Algorithms},
			volume={22},
			number={1},
			pages={33\ndash 42},
		}
		
		\bib{bollobas1987threshold}{article}{
			author={Bollob{\'a}s, B{\'e}la},
			author={Thomason, Andrew~Gordon},
			title={Threshold functions},
			date={1987},
			journal={Combinatorica},
			volume={7},
			number={1},
			pages={35\ndash 38},
		}
		
		\bib{BMPP17}{article}{
			author={B{\"o}ttcher, Julia},
			author={Montgomery, Richard~Harford},
			author={Parczyk, Olaf},
			author={Person, Yury},
			title={Embedding spanning bounded degree graphs in randomly perturbed
				graphs},
			date={2018},
			journal={arXiv:1802.04603},
		}
		
		\bib{dirac1952some}{article}{
			author={Dirac, Gabriel~Andrew},
			title={Some theorems on abstract graphs},
			date={1952},
			journal={Proceedings of the London Mathematical Society},
			volume={3},
			number={1},
			pages={69\ndash 81},
		}
		
		\bib{HanZhao2017}{article}{
			author={Han, J.},
			author={Zhao, Y.},
			title={Hamiltonicity in randomly perturbed hypergraphs},
			date={2018},
			journal={arXiv:1802.04586},
		}
		
		\bib{haxell2001tree}{article}{
			author={Haxell, Penny},
			title={Tree embeddings},
			date={2001},
			journal={Journal of Graph Theory},
			volume={36},
			number={3},
			pages={121\ndash 130},
		}
		
		\bib{janson2011random}{book}{
			author={Janson, S.},
			author={{\L}uczak, T.},
			author={Ruci\'nski, A.},
			title={Random graphs},
			series={Wiley-Interscience Series in Discrete Mathematics and
				Optimization},
			publisher={Wiley-Interscience, New York},
			date={2000},
		}
		
		\bib{JK_Trees}{article}{
			author={Joos, Felix},
			author={Kim, Jawhoon},
			title={Spanning trees in randomly perturbed graphs},
			date={2018},
			journal={arXiv:1803.04958},
		}
		
		\bib{KSStrees}{article}{
			author={Koml\'os, J\'anos},
			author={S\'ark\"ozy, G\'abor},
			author={Szemer\'edi, Endre},
			title={Spanning trees in dense graphs},
			date={2001},
			journal={Combinatorics, Probability and Computing},
			volume={10},
			pages={397\ndash 416},
		}
		
		\bib{Kor76}{article}{
			author={Kor{\v{s}}unov, A.~D.},
			title={Solution of a problem of {P}. {E}rd{\H o}s and {A}. {R\'{e}}nyi
				on {H}amiltonian cycles in undirected graphs},
			date={1976},
			journal={Doklady Akademii Nauk SSSR},
			volume={228},
			number={3},
			pages={529\ndash 532},
		}
		
		\bib{krivelevich2015cycles}{article}{
			author={Krivelevich, Michael},
			author={Kwan, Matthew},
			author={Sudakov, Benny},
			title={Cycles and matchings in randomly perturbed digraphs and
				hypergraphs},
			date={2016},
			journal={Combinatorics, Probability and Computing},
			volume={25},
			number={6},
			pages={909\ndash 927},
		}
		
		\bib{krivelevich2015bounded}{article}{
			author={Krivelevich, Michael},
			author={Kwan, Matthew},
			author={Sudakov, Benny},
			title={Bounded-degree spanning trees in randomly perturbed graphs},
			date={2017},
			journal={SIAM Journal on Discrete Mathematics},
			volume={31},
			number={1},
			pages={155\ndash 171},
		}
		
		\bib{krivsud2006survey}{incollection}{
			author={Krivelevich, Michael},
			author={Sudakov, Benny},
			title={Pseudo-random graphs},
			date={2006},
			booktitle={More sets, graphs and numbers},
			series={Bolyai Soc. Math. Stud.},
			volume={15},
			publisher={Springer},
			address={Berlin},
			pages={199\ndash 262},
		}
		
		\bib{MM_HyperPerturbed}{article}{
			author={McDowell, Andrew},
			author={Mycroft, Richard},
			title={Hamilton {$\ell$}-cycles in randomly perturbed hypergraphs},
			date={2018},
			journal={arXiv:1802.04242},
		}
		
		\bib{M14a}{article}{
			author={Montgomery, Richard},
			title={Embedding bounded degree spanning trees in random graphs},
			date={2014},
			journal={arXiv:1405.6559v2},
		}
		
		\bib{MontgomeryTrees}{article}{
			author={Montgomery, Richard},
			title={Spanning trees in random graphs},
			date={2018},
			journal={arXiv:1810.03299},
		}
		
		\bib{Pos76}{article}{
			author={P{\'o}sa, L.},
			title={Hamiltonian circuits in random graphs},
			date={1976},
			journal={Discrete Mathematics},
			volume={14},
			number={4},
			pages={359\ndash 364},
		}
		
	\end{biblist}
\end{bibdiv}

\endgroup
\end{document}